\newtheorem{thm}{Theorem}[section]
\newtheorem{prop}[thm]{Proposition}
\newtheorem{lem}[thm]{Lemma}
\newtheorem{defi}[thm]{Definition}
\newtheorem{remark}[thm]{Remark}
\newtheorem{example}[thm]{Example}
\newtheorem{pb}[thm]{Problem}
\newenvironment{rk}{\begin{remark}\rm}{\end{remark}}
\newenvironment{ex}{\begin{example}\rm}{\end{example}}
\numberwithin{equation}{section}
\newcommand{\8}{\infty}
\newcommand{\be}{\begin{eqnarray*}}
\newcommand{\ee}{\end{eqnarray*}}
\newcommand{\beq}{\begin{equation}}
\newcommand{\eeq}{\end{equation}}
\newcommand{\beqn}{\begin{equation*}}
\newcommand{\eeqn}{\end{equation*}}
\begin{document}

\title{Area integral functions and $H^{\8}$ functional calculus for sectorial operators on Hilbert spaces}

\thanks{{\it 2010 Mathematics Subject Classification:} 47D06, 47A60}
\thanks{{\it Key words:} Sectorial operator, $H^{\8}$ Functional calculus, Area integral function, Square function, Hilbert space.}

\author{Zeqian Chen}

\address{Wuhan Institute of Physics and Mathematics, Chinese Academy of Sciences, 30 West District, Xiao-Hong-Shan, Wuhan 430071,China}
\email{zqchen@wipm.ac.cn}

\thanks{Z. Chen is partially supported by NSFC grant No. 11171338.}

\author{Mu Sun}

\address{Wuhan Institute of Physics and Mathematics, Chinese Academy of Sciences, 30 West District, Xiao-Hong-Shan, Wuhan 430071, China
and Graduate University of Chinese Academy of Sciences, Beijing 100049, China}


\date{}
\maketitle

\markboth{Z. Chen and M. Sun}%
{$H^{\8}$ functional calculus}

\begin{abstract}
Area integral functions are introduced for sectorial operators on Hilbert spaces. We establish the equivalence relationship between the square and area integral functions. This immediately extends McIntosh/Yagi's results on $H^{\8}$ functional calculus of sectorial operators on Hilbert spaces to the case when the square functions are replaced by the area integral functions.
\end{abstract}


\section{preliminaries}\label{pre}

The theory of sectorial operators, their $H^{\8}$ functional calculus, and their associated square functions on Hilbert spaces grew out from McIntosh's seminal paper \cite{M1986} and a subsequent work by McIntosh/Yagi \cite{MY1989}, and then was generalized to the setting of Banach spaces by Cowling-Doust-McIntosh-Yagi \cite{CDMY1996} and by Kalton/Weis \cite{KW2001}. The aim of this paper is to introduce so-called area integral functions for sectorial operators on Hilbert spaces and to extend McIntosh/Yagi's theory to the case when the square functions are replaced by the area integral functions. The corresponding $L_p$ case will be given elsewhere \cite{CS}.

To this end, in this section we give a brief review of $H^{\8}$ functional calculus on general Banach spaces, and preliminary results that will be used for what follows. We mainly follow the fundamental works \cite{CDMY1996, M1986, MY1989}. See also \cite{ADM1996, LeM2007} for further details. We refer to \cite{Gold1985} for the necessary background on semigroup theory.

\subsection{Sectorial operators and $C_0$-semigroups}

Let $\mathbf{X}$ be a complex Banach space. We denote by $\mathcal{B} ( \mathbf{X})$ the Banach algebra of all bounded operators on $\mathbf{X}.$ Let $A$ be a closed and densely defined operator on $\mathbf{X}.$ We let $\mathrm{D} (A),$ $\mathrm{N} (A)$ and $\mathrm{R} (A)$ denote the domain, kernel and range of $A$ respectively. Further we let $\sigma (A)$ and $\rho (A)$ denote the spectrum and resolvent set of $A$ respectively. Then, for any $\lambda \in \rho(A),$ we let
\be
R(\lambda, A)=(\lambda-A)^{-1}
\ee
denote the corresponding resolvent operator.

For any
$\omega\in(0,\pi)$, we let
\be
\Sigma_\omega = \{z \in \mathbb{C}\setminus \{0\} : \; | \mathrm{Arg} (z) | < \omega\}
\ee
be the open sector of angle $2\omega$ around the half-line $(0, \infty).$ Then, $A$ is said to be a sectorial operator of type $\omega$ if $A$ is closed and densely defined, $\sigma (A) \subset \overline{\Sigma}_\omega,$ and for any $\theta \in(\omega,\pi)$ there is a constant $K_\theta>0$ such that
\beq\label{eq:EsitSectorialOper}
| z R(z,A) | \le K_\theta, \quad z \in \mathbb{C} \setminus \overline{\Sigma}_\theta .
\eeq
We say that $A$ is sectorial of type $0$ if it is of type $\omega$ for any $\omega >0.$

Let $(T_t)_{t \ge 0}$ be a bounded $C_0$-semigroup on $\mathbf{X}$ and let $- A$ denote its infinitesimal generator. Then $A$ is closed and densely defined. Moreover, $\sigma (A) \subset \overline{\Sigma}_{\frac{\pi}{2}}$ and, for any $\lambda \in \mathbb{C} \setminus \overline{\Sigma}_{\frac{\pi}{2}}$ we have
\be
R (\lambda, A ) = - \int^{\8}_0 e^{\lambda t} T_t d t
\ee
in the strong operator topology, from which it follows that $A$ is a sectorial operator of type $\frac{\pi}{2}.$

\begin{prop}\label{prop:AnalyticSemigroup}{\rm ( see e.g. \cite{Gold1985})}
Let $(T_t)_{t \ge 0}$ be a bounded $C_0$-semigroup on $\mathbf{X}$ with the infinitesimal generator $- A.$ Given $\omega \in (0, \frac{\pi}{2}),$ the following are equivalent:
\begin{enumerate}[{\rm (i)}]

\item $A$ is sectorial of type $\omega.$

\item For any $\alpha \in (0, \frac{\pi}{2} - \omega ),$ $(T_t)_{t \ge 0}$ admits a bounded analytic extension $(T_z)_{z \in \Sigma_{\alpha}}$ in $\mathcal{B} ( \mathbf{X}).$

\end{enumerate}
\end{prop}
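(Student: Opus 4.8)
The plan is to establish the two implications separately, in each case passing between $A$ and the semigroup through an integral representation of the resolvent.

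For (i) $\Rightarrow$ (ii), fix $\alpha \in (0,\frac{\pi}{2}-\omega)$ and choose an intermediate angle $\theta$ with $\omega < \theta < \frac{\pi}{2}-\alpha$; such a $\theta$ exists precisely because $\alpha < \frac{\pi}{2}-\omega$. I would define the candidate extension by the Dunford-type contour integral
\[
T_z=\frac{1}{2\pi i}\int_{\Gamma}e^{-z\lambda}R(\lambda,A)\,d\lambda,\qquad z\in\Sigma_\alpha,
\]
where $\Gamma$ is the suitably oriented boundary of $\Sigma_\theta$, indented by a circular arc of radius $1/|z|$ about the origin. The estimate \eqref{eq:EsitSectorialOper} gives $\|R(\lambda,A)\|\le K_\theta/|\lambda|$, while for $z=|z|e^{i\psi}$ with $|\psi|<\alpha$ and $\lambda$ on the rays of $\Gamma$ one has $\mathrm{Re}(z\lambda)\ge|z||\lambda|\cos(\alpha+\theta)$ with $\cos(\alpha+\theta)>0$, so $e^{-z\lambda}$ decays exponentially along the rays while the arc contributes a bounded term. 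After the substitution $s=|z||\lambda|$ the resulting bound no longer depends on $|z|$, and I expect $\sup_{z\in\Sigma_\alpha}\|T_z\|<\infty$.

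It then remains to verify that $(T_z)$ really is the analytic extension. Analyticity in $z$ follows by differentiating under the integral sign, justified by the same exponential estimates; the semigroup law $T_{z_1}T_{z_2}=T_{z_1+z_2}$ follows from the resolvent identity together with Fubini's theorem on the product contour; and consistency, $T_z=T_t$ for $z=t>0$, reduces to identifying the contour integral with the inverse Laplace transform of the representation $R(\lambda,A)=-\int_0^{\8}e^{\lambda t}T_t\,dt$ recalled just before the statement. Together with strong continuity at the origin this exhibits $(T_z)_{z\in\Sigma_\alpha}$ as the desired bounded analytic extension.

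For the converse (ii) $\Rightarrow$ (i), write $M_\alpha=\sup_{z\in\Sigma_\alpha}\|T_z\|<\infty$ and fix $\theta\in(\omega,\pi)$. Since \eqref{eq:EsitSectorialOper} is already known for $\theta\ge\frac{\pi}{2}$ from the bounded semigroup, I may assume $\omega<\theta<\frac{\pi}{2}$ and pick $\alpha\in(\frac{\pi}{2}-\theta,\frac{\pi}{2}-\omega)$. The key step is to rotate the Laplace representation into the sector: for $\psi\in(-\alpha,\alpha)$, applying Cauchy's theorem to $z\mapsto e^{\lambda z}T_z$ on a sector and letting the arcs at infinity vanish—which they do by boundedness of $(T_z)$ and the exponential factor—yields
\[
R(\lambda,A)=-\int_0^{\8}e^{\lambda r e^{i\psi}}T_{re^{i\psi}}\,e^{i\psi}\,dr,
\]
valid whenever $\mathrm{Re}(\lambda e^{i\psi})<0$, with norm dominated by $M_\alpha/|\mathrm{Re}(\lambda e^{i\psi})|$. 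Writing $\lambda=|\lambda|e^{i\chi}$ with $|\chi|>\theta$, so that $\mathrm{Re}(\lambda e^{i\psi})=|\lambda|\cos(\chi+\psi)$, I would choose the sign of $\psi$ so as to push $\chi+\psi$ past $\frac{\pi}{2}$ toward $\pi$; the inequality $\alpha>\frac{\pi}{2}-\theta$ is exactly what leaves room to do this uniformly in $\chi$, giving $|\cos(\chi+\psi)|\ge c(\theta)>0$ and hence $\|R(\lambda,A)\|\le M_\alpha/(c(\theta)|\lambda|)$, which is \eqref{eq:EsitSectorialOper} with $K_\theta=M_\alpha/c(\theta)$.

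The convergence, differentiation, and semigroup-law computations are routine. I expect the genuine obstacles to be two: in the forward direction, choosing the $|z|$-dependent contour so that the boundedness constant is uniform over the whole sector, and identifying the resulting operator with the original $T_t$; in the converse, rigorously justifying the rotation of the Laplace contour, that is, the vanishing of the circular arcs at infinity, which is where both analyticity and boundedness are genuinely used.
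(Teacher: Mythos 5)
The paper itself offers no proof of this proposition (it is quoted from \cite{Gold1985}), so your attempt can only be measured against the standard argument. Your direction (i) $\Rightarrow$ (ii) is that standard argument and is essentially sound: one only needs to make precise that the indentation arc of radius $1/|z|$ must join the two rays of $\partial\Sigma_\theta$ by passing around the origin on the side of the negative real axis (so that the contour stays in the resolvent set, where \eqref{eq:EsitSectorialOper} applies), after which the substitution $s=|z||\lambda|$ gives a bound uniform in $z\in\Sigma_\alpha$, and analyticity, the semigroup law, and consistency with $(T_t)_{t\ge 0}$ via uniqueness of Laplace transforms go through as you describe.

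The direction (ii) $\Rightarrow$ (i) has a genuine gap, and it is not the one you flagged. You assert that Cauchy's theorem yields $R(\lambda,A)=-\int_0^{\8}e^{\lambda re^{i\psi}}T_{re^{i\psi}}e^{i\psi}\,dr$ ``valid whenever $\mathrm{Re}(\lambda e^{i\psi})<0$.'' But Cauchy's theorem can only compare this rotated integral with the unrotated one $-\int_0^{\8}e^{\lambda t}T_t\,dt$, which diverges unless $\mathrm{Re}\,\lambda<0$; and the $\lambda$ you actually need --- those with $\theta<|\mathrm{Arg}\,\lambda|\le\frac{\pi}{2}$, which have $\mathrm{Re}\,\lambda\ge 0$ --- are not even known a priori to lie in $\rho(A)$, since $\sigma(A)\subset\overline{\Sigma}_\omega$ is part of what ``sectorial of type $\omega$'' asserts; writing $R(\lambda,A)$ for such $\lambda$ is circular. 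The rotation argument as stated proves the formula only on the set where $\mathrm{Re}(\lambda e^{i\phi})<0$ for every $\phi$ between $0$ and $\psi$, i.e.\ essentially where sectoriality was already known from the half-plane bound. Closing the gap requires an additional idea, for instance: (a) establish the rotated formula for $\lambda\in(-\8,0)$ only (there the rotation is legitimate, since $\cos(\pi+\phi)\le-\cos\alpha<0$ along the whole swept sector), note that the rotated integral $S(\lambda)$ is analytic and locally bounded on the full half-plane $H_\psi=\{\mathrm{Re}(\lambda e^{i\psi})<0\}$, and use the blow-up of the resolvent norm near the spectrum, $\|R(\lambda,A)\|\ge \mathrm{dist}(\lambda,\sigma(A))^{-1}$, together with connectedness of $H_\psi$ to conclude that $H_\psi\subset\rho(A)$ and $S=R(\cdot,A)$ there; or (b) verify directly that $S(\lambda)$ is a two-sided inverse of $\lambda-A$, by integrating $\frac{d}{dr}\bigl[e^{\lambda e^{i\psi}r}T_{re^{i\psi}}x\bigr]=e^{i\psi}e^{\lambda e^{i\psi}r}T_{re^{i\psi}}(\lambda-A)x$ for $x\in \mathrm{D}(A)$, which uses the identity $T_z'=-AT_z$ on the open sector; or (c) show that $(T_{te^{i\psi}})_{t\ge0}$ is a bounded $C_0$-semigroup with generator $-e^{i\psi}A$ and apply the half-plane resolvent estimate to it. With any of these in place, your choice of $\psi$ pushing $\mathrm{Arg}\,\lambda+\psi$ past $\frac{\pi}{2}$, and the resulting lower bound $|\cos(\mathrm{Arg}\,\lambda+\psi)|\ge c(\theta)>0$, does complete the proof as you indicate.
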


By definition, a $C_0$-semigroup $(T_t)_{t \ge 0}$ is called a bounded analytic semigroup if there exist a positive angle $0 < \alpha <\frac{\pi}{2}$ and a bounded analytic extension of $(T_t)_{t \ge 0}$ on $\Sigma_\alpha.$ That is, there exists a bounded family of operators $(T_z)_{z \in \Sigma_\alpha}$ extending $(T_t)_{t \ge 0}$ and such that $z \mapsto T_z$ is analytic from $\Sigma_\alpha$ into $\mathcal{B} (\mathbf{X}).$ Note that such an extension necessarily satisfies $T_z T_w = T_{z + w}$ for all $z, w \in \Sigma.$

By Proposition \ref{prop:AnalyticSemigroup}, a $C_0$-semigroup $(T_t)_{t \ge 0}$ with the infinitesimal generator $- A$ is a bounded analytic semigroup if and only if $A$ is a sectorial operator of type $\omega$ for some $\omega \in (0, \frac{\pi}{2}).$

\subsection{$H^{\8}$ functional calculus}

Given any $\theta \in (0,\pi),$ we let $H^\infty(\Sigma_\theta)$ be the set of all bounded analytic functions $f:\Sigma_\theta \to \mathbb{C}.$ This is a Banach algebra for the supermum norm
\be
\| f \|_{\8, \theta}: = \sup_{z \in \Sigma_\theta} | f (z) |.
\ee
Then we let $H^\infty_0(\Sigma_\theta)$ be the subalgebra of all $f\in H^\infty(\Sigma_\theta)$ for which there exist two positive numbers $s,c>0$ such that
\beq\label{eq:EstiH_0funct}
| f(z) | \le c \frac{| z |^s}{(1+ | z | )^{2s}},\quad z \in \Sigma_\theta.
\eeq

Now given a sectorial operator $A$ of type $\omega\in(0,\pi)$ on a Banach space $\mathbf{X}$, a number $\theta\in(\omega, \pi ),$ and a function $f\in H^\infty_0(\Sigma_\theta),$ one may define an
operator $f(A)\in \mathcal{B} ( \mathbf{X} )$ as follows. We let $\gamma \in (\omega,\theta)$ be an intermediate angle and consider the oriented contour $\Gamma_\gamma$ defined by
\be
\Gamma_\gamma(t)= \left\{
\begin{split} & -te^{i\gamma},\quad t\in \mathbb{R}_-;\\[0.6mm]
& te^{-i\gamma},\quad t\in \mathbb{R}_+. \end{split} \right.
\ee
In other words, $\Gamma_\gamma$ is the boundary of $\Sigma_\gamma$ oriented counterclockwise. For any $f\in H^\infty_0(\Sigma_\theta),$ we set
\beq\label{eq:f(A)}
f(A)=\frac{1}{2\pi i}\int_{\Gamma_\gamma} f(z)R(z,A)dz.
\eeq
By \eqref{eq:EsitSectorialOper} and \eqref{eq:EstiH_0funct}, it follows that this integral is absolutely convergent. Indeed, \eqref{eq:EstiH_0funct} implies that for any $\gamma \in (0, \theta),$ we have
\be
\int_{\Gamma_{\gamma}} \Big | \frac{f(z)}{z} \Big | | d z| < \8.
\ee
Thus $f(A)$ is a well defined element of $\mathcal{B} (\mathbf{X}).$ It follows from Cauchy's Theorem that the definition of $f(A)$ does not depend on the choice of $\gamma.$ Furthermore, it can be shown that the mapping $f \mapsto f(A)$ is an algebra homomorphism from $H^\infty_0(\Sigma_\theta)$ into $\mathcal{B} ( \mathbf{X}).$

\begin{defi}\label{def:HinftyCalculus}
Let $A$ be a sectorial operator of type $\omega \in(0,\pi)$ on $\mathbf{X}$ and let $\theta \in (\omega, \pi)$. We say that $A$ admits a bounded $H^\infty(\Sigma_\theta)$ functional calculus if
there is a constant $K>0$ such that
\beq\label{eq:HinftyInequa}
\| f(A)\| \leq K \| f \|_{\infty,\theta}, \quad \forall f \in H^\infty_0(\Sigma_\theta).
\eeq
\end{defi}

\begin{remark}\label{rk:HinftyFunctDuality}\rm
Suppose that $\mathbf{X}$ is reflexive and that $A$ is a sectorial operator of type $\omega \in (0, \pi)$ on $\mathbf{X}.$ Then $A^*$ is a sectorial operator of type $\omega$ on $\mathbf{X}^*$ as well. Given $0< \omega < \theta< \pi$ and any $f \in H^{\8} (\Sigma_\theta),$ let us define
\be
\tilde{f} (z) = \overline{f ( \bar{z} )},\quad \forall z \in \Sigma_\theta.
\ee
Then $\tilde{f} \in H^{\8} (\Sigma_\theta)$ and $\| \tilde{f} \|_{\8, \theta} = \| f \|_{\8, \theta}.$ Moreover,
\be
\tilde{f} (A^*) = f(A)^*,\quad \forall f \in H^{\8}_0 (\Sigma_\theta).
\ee
Consequently, $A^*$ admits a bounded $H^\infty(\Sigma_\theta)$ functional calculus whenever $A$ does.
\end{remark}

\begin{remark}\label{rk:HinftyFunctExtension}\rm
For any $\lambda \in \mathbb{C} \setminus \overline{\Sigma}_\theta,$ define $R_\lambda (z) = (\lambda -z)^{-1}.$ Then $R_\lambda \in H^{\8} (\Sigma_\theta).$ Set
\be
\widetilde{H}^{\8}_0 (\Sigma_\theta) = H^{\8}_0 (\Sigma_\theta) \oplus \mathrm{span} \{ 1, R_{-1} \} \subset H^{\8} (\Sigma_\theta).
\ee
This is a subalgebra of $H^{\8} (\Sigma_\theta).$ Now we define
\be
u_A : \widetilde{H}^{\8}_0 (\Sigma_\theta) \mapsto \mathcal{B} (\mathbf{X})
\ee
be the linear mapping such that
\be
u_A (1) = I_{\mathbf{X}},\quad u_A (R_{-1}) = - (1 + A)^{-1},
\ee
and $u_A (f) = f(A)$ for any $f \in H^{\8}_0 (\Sigma_\theta).$ Then, it is easy to check that $u_A$ is an algebra homomorphism and for any $\lambda \in \mathbb{C} \setminus \overline{\Sigma}_\theta,$ we have
\be
R_\lambda \in \widetilde{H}^{\8}_0 (\Sigma_\theta) \quad \text{and}\quad u_A (R_\lambda) = R(\lambda, A).
\ee
$u_A$ is said to be the holomorphic functional calculus of $A$ on $\widetilde{H}^{\8}_0 (\Sigma_\theta).$

Evidently, $A$ admits a bounded $H^\infty(\Sigma_\theta)$ functional calculus if and only if the homomorphism $u_A$ is continuous.
\end{remark}

Let $A$ be a sectorial operator of type $\omega \in (0, \pi)$ and assume that $A$ has dense range. Let $\varphi (z) = z (1+z)^{-2}$ and so $\varphi (A) = A (1 + A)^{-2}.$ Then $\varphi (A)$ is one-one and has dense range (see e.g. \cite[Proposition 2.4]{LeM2007}). Following \cite{M1986, CDMY1996}, we can define an operator $f(A)$ for any $f \in H^{\8} (\Sigma_\theta)$ whenever $\omega < \theta < \pi.$ Indeed, for each $f \in H^{\8} (\Sigma_\theta)$ the product function $f \varphi$ belongs to $H^{\8}_0 (\Sigma_\theta).$ Then using the fact that $\varphi (A)$ is one-one we set
\be
f(A) = \varphi (A)^{-1} (f \varphi) (A)
\ee
with the domain being
\be
\mathrm{D} ( f(A)) = \big \{ x \in \mathbf{X}:\; (f \varphi) (A) (x) \in \mathrm{D} (A) \cap \mathrm{R} (A) \big \}.
\ee
This domain contains $\mathrm{D} (A) \cap \mathrm{R} (A)$ and so is dense in $\mathbf{X}.$ Since $\varphi (A)$ is bounded, $f(A)$ is closed. Therefore, $f(A)$ is bounded if and only if $\mathrm{D} ( f(A)) = \mathbf{X}.$ Note however that $f(A)$ may be unbounded in general.

\begin{thm}\label{th:HinftyCalculus}{\rm (\cite{M1986, CDMY1996})}
Let $0< \omega < \theta < \pi$ and let $A$ be a sectorial operator of type $\omega$ on $\mathbf{X}$ with dense range. Then $f(A)$ is bounded for any $f \in H^{\8} (\Sigma_\theta)$ if and only if $A$ admits a bounded $H^\infty(\Sigma_\theta)$ functional calculus. In that case, we have
\be
\| f(A) \| \le K \| f \|_{\8, \theta},\quad \forall f \in H^{\8} (\Sigma_\theta),
\ee
where the constant $K$ is the one appearing in \eqref{eq:HinftyInequa}.
\end{thm}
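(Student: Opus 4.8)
The crux of the proof is a convergence lemma for the functional calculus, which I would isolate first.

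\smallskip
\emph{Convergence Lemma.} Suppose $A$ is sectorial of type $\omega$ with dense range and admits a bounded $H^\infty(\Sigma_\theta)$ functional calculus with constant $K$ as in \eqref{eq:HinftyInequa}. Let $(f_n)_n \subset H^\infty_0(\Sigma_\theta)$ satisfy $\sup_n \|f_n\|_{\infty,\theta} < \infty$ and $f_n(z) \to f(z)$ for every $z \in \Sigma_\theta$, where $f \in H^\infty(\Sigma_\theta)$. Then $f(A) \in \mathcal{B}(\mathbf{X})$, $f_n(A) x \to f(A)x$ for every $x \in \mathbf{X}$, and $\|f(A)\| \le \liminf_n \|f_n(A)\|$.

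\smallskip
I would prove this lemma as follows. On the dense subspace $\mathrm{R}(\varphi(A))$, write $x = \varphi(A) y$; then $f_n(A) x = f_n(A)\varphi(A) y = (f_n \varphi)(A) y$, the last step by the homomorphism property on $H^\infty_0$. Since $f_n\varphi \to f\varphi$ pointwise with the integrands $f_n(z)\varphi(z) R(z,A)y$ dominated along $\Gamma_\gamma$ by $(\sup_n\|f_n\|_\infty)\,|\varphi(z)|\,\|R(z,A)y\|$ — an integrable function by the $H^\infty_0$ decay of $\varphi$ together with \eqref{eq:EsitSectorialOper} — dominated convergence gives $(f_n\varphi)(A) y \to (f\varphi)(A) y = f(A)x$. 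Thus $f_n(A) x \to f(A) x$ on $\mathrm{R}(\varphi(A))$. The uniform bound $\sup_n\|f_n(A)\| \le K\sup_n\|f_n\|_\infty < \infty$ then upgrades this to convergence on all of $\mathbf{X}$ by an $\varepsilon/3$ argument and shows the pointwise limit operator is bounded with norm $\le \liminf_n\|f_n(A)\|$; since $f(A)$ is closed and agrees with this bounded limit on a dense subspace, $f(A)$ is bounded and equals it.

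For $(2)\Rightarrow(1)$ with the sharp constant, given $f \in H^\infty(\Sigma_\theta)$ I would set $f_n = f\psi_n$ with
\[
\psi_n(z) = \exp\!\Big(\tfrac1n \log \tfrac{4z}{(1+z)^2}\Big),
\]
which is legitimate because $\tfrac{4z}{(1+z)^2}$ is analytic and nonvanishing on the simply connected domain $\Sigma_\theta$. Each $\psi_n$ lies in $H^\infty_0(\Sigma_\theta)$ (it decays like $|z|^{1/n}$ at $0$ and $|z|^{-1/n}$ at $\infty$), $\psi_n \to 1$ pointwise, and $\|\psi_n\|_{\infty,\theta} = \big(\sup_{\Sigma_\theta}|\tfrac{4z}{(1+z)^2}|\big)^{1/n} \to 1$. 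Hence $f_n \in H^\infty_0(\Sigma_\theta)$, $\sup_n\|f_n\|_\infty < \infty$, and $f_n \to f$ pointwise, so the Convergence Lemma gives $f(A) \in \mathcal{B}(\mathbf{X})$ and
\[
\|f(A)\| \le \liminf_n \|f_n(A)\| \le K\,\limsup_n \|f\psi_n\|_{\infty,\theta} \le K\|f\|_{\infty,\theta}.
\]
For $(1)\Rightarrow(2)$, I would first note that for $f \in H^\infty_0(\Sigma_\theta)$ the extended definition $\varphi(A)^{-1}(f\varphi)(A)$ coincides with \eqref{eq:f(A)}, since $f\varphi \in H^\infty_0$ and $(f\varphi)(A) = \varphi(A) f(A)$; thus $f \mapsto f(A)$ is an everywhere-defined linear map $H^\infty(\Sigma_\theta) \to \mathcal{B}(\mathbf{X})$ under hypothesis (1). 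Its graph is closed: if $f_k \to f$ in $\|\cdot\|_{\infty,\theta}$ and $f_k(A) \to T$ in $\mathcal{B}(\mathbf{X})$, then $f_k \to f$ pointwise and uniformly bounded, so the Convergence Lemma (applicable because each $f(A)$ is bounded by (1)) gives $f_k(A) \to f(A)$ strongly, whence $T = f(A)$. The closed graph theorem then produces a constant for which $\|f(A)\| \le K\|f\|_{\infty,\theta}$ on all of $H^\infty$, and in particular on $H^\infty_0$, which is (2).

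The main obstacle is the Convergence Lemma, and within it the passage from strong convergence on $\mathrm{R}(\varphi(A))$ to boundedness of the closed operator $f(A)$: one must combine the uniform operator bound with the closedness of $f(A)$ to conclude $\mathrm{D}(f(A)) = \mathbf{X}$, not merely that a strong limit exists. The dominated-convergence step along $\Gamma_\gamma$ is routine once the dominating function is identified, and the normalization $\|\psi_n\|_{\infty,\theta}\to 1$ is precisely the device that forces the constant in $(2)\Rightarrow(1)$ down to the optimal $K$ rather than an inflated multiple.
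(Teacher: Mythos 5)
The paper never proves this statement: Theorem \ref{th:HinftyCalculus} is quoted as background and attributed to \cite{M1986, CDMY1996}, so the only meaningful comparison is with the classical arguments in those references. Your proof is essentially that classical argument, and it is correct. The three ingredients --- a convergence lemma proved by dominated convergence of the contour integral \eqref{eq:f(A)} on the dense subspace $\mathrm{R}(\varphi(A))$, the regularizing factors $\psi_n\in H^\infty_0(\Sigma_\theta)$ with $\psi_n\to 1$ pointwise and $\|\psi_n\|_{\infty,\theta}\to 1$ (which is what yields the sharp constant $K$ from \eqref{eq:HinftyInequa} rather than an inflated one), and the closed graph theorem for the direction $(1)\Rightarrow(2)$ --- are exactly McIntosh's scheme. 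The individual steps check out: $A$ having dense range forces $A$ (hence $\varphi(A)$) to be injective, so $\mathrm{R}(\varphi(A))$ is dense; $f(A)\varphi(A)y=(f\varphi)(A)y$; $f(A)$ is closed, so agreement with a bounded operator on a dense subspace forces $\mathrm{D}(f(A))=\mathbf{X}$; and $|\psi_n(z)|=|4z/(1+z)^2|^{1/n}$ satisfies \eqref{eq:EstiH_0funct} with $s=1/n$ because $|1+z|\ge c_\theta(1+|z|)$ on $\Sigma_\theta$ when $\theta<\pi$.

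One wrinkle you should repair in a final write-up: your Convergence Lemma is stated for sequences in $H^\infty_0(\Sigma_\theta)$ under hypothesis $(2)$, yet in the $(1)\Rightarrow(2)$ direction you invoke it for a sequence $f_k\in H^\infty(\Sigma_\theta)$ at a point where $(2)$ is not yet available. Your parenthetical remark identifies the right reason the argument survives, but the lemma as literally stated does not apply, on two counts: the $f_k$ need not lie in $H^\infty_0$, and the uniform operator bound must now come from the assumed norm convergence $f_k(A)\to T$ rather than from $(2)$. The clean fix, which is how the lemma is formulated in \cite{M1986, CDMY1996}, is to state it as: if $(f_n)\subset H^\infty(\Sigma_\theta)$ is uniformly bounded and converges pointwise to $f$, and if each $f_n(A)$ is bounded with $\sup_n\|f_n(A)\|<\infty$, then $f(A)$ is bounded, $f_n(A)\to f(A)$ strongly, and $\|f(A)\|\le\liminf_n\|f_n(A)\|$. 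For $f_n\in H^\infty$ the identity $f_n(A)\varphi(A)y=(f_n\varphi)(A)y$ then follows from the definition $f_n(A)=\varphi(A)^{-1}(f_n\varphi)(A)$ of the extended calculus (rather than from the $H^\infty_0$ homomorphism property) by the same two-line computation. With that restatement both directions invoke literally the same lemma, and your proof is complete.
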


\begin{remark}\label{rk:HinftyCalculusReflexiveSpace}\rm
Let $A$ be a sectorial operator on $\mathbf{X}.$ If $\mathbf{X}$ is a reflexive Banach space, then $\mathbf{X}$ has a direct sum decomposition
\be
\mathbf{X} = \mathrm{N} (A) \oplus \overline{\mathrm{R} (A)}
\ee
(see \cite[Theorem 3.8]{CDMY1996}). Then $A$ is one-one if and only if $A$ has dense range. Moreover, the restriction of $A$ to $\overline{\mathrm{R} (A)}$ is a sectorial operator with dense range. Thus changing $\mathbf{X}$ into $\overline{\mathrm{R} (A)},$ or changing $A$ into $A+P$ where $P$ is the projection onto $\mathrm{N} (A)$ with the kernel equals to $\overline{\mathrm{R} (A)},$ it reduces to the case when a sectorial operator has dense range.
\end{remark}

\begin{remark}\label{rk:HinftyCalculusImaginaryPowers}\rm
Given $s \in \mathbb{R},$ let $f_s$ be the analytic function on $\mathbb{C} \setminus (-\8, 0]$ defined by $f_s (z) = z^{\mathrm{i} s}.$ Then $f_s \in H^{\8} (\Sigma_\theta)$ for any $\theta \in (0, \pi)$ with
\be
\| f_s \|_{\8, \theta} = e^{\theta |s|}.
\ee
The imaginary powers of a sectorial operator $A$ with dense range may be defined by letting $A^{\mathrm{i}s}= f_s (A)$ for any $s \in \mathbb{R}.$ In particular, $A^{\mathrm{i}s}$ is bounded for any $s \in \mathbb{R}$ if $A$ admits a bounded $H^\infty(\Sigma_\theta)$ functional calculus for some $\theta \in (0, \pi)$ (see e.g. \cite[Section 5]{CDMY1996}).
\end{remark}

\subsection{Square functions on Hilbert spaces}

Square functions for sectorial operators on Hilbert spaces were introduced by McIntosh in \cite{M1986} and developed further with applications to $H^{\8}$ functional calculus in \cite{MY1989}. We give a brief description of this theory in this subsection.

To this end, we let $\mathbb{H}$ be a Hilbert space throughout the paper. Let $A$ be a sectorial operator of type $\omega \in (0, \pi)$ on $\mathbb{H}.$ We set
\be
H^{\8}_0 (\Sigma_{\omega +}) = \bigcup_{\omega < \theta < \pi} H^{\8}_0 (\Sigma_\theta).
\ee
Then for any $F \in H^{\8}_0 (\Sigma_{\omega +}),$ we set
\beq\label{eq;SquareFunct}
\| x \|_F : = \left ( \int^{\8}_0 \| F (t A ) x \|^2 \frac{d t}{ t} \right )^{\frac{1}{2}}, \quad \forall x \in \mathbb{H}.
\eeq
In the above definition, $F(t A)$ means $F_t (A)$ where $F_t (z) = F (t z).$ By Lebesgue's dominated theorem it is easy to check that for any $x \in \mathbb{H},$ the mapping $t \mapsto F(t A)x$ is continuous and hence $\| x \|_F$ is well defined. However we may have $\| x \|_F = \8$ for some $x.$ We call $\| x \|_F$ a square function associated with $A.$

\begin{thm}\label{th:SquareFunctEquiv}{\rm (McIntosh/Yagi \cite{MY1989})}
Let $\mathbb{H}$ be a Hilbert space. Let $A$ be a sectorial operator of type $\omega \in (0, \pi)$ on $\mathbb{H},$ and suppose that $A$ is one-one. Given $\theta \in (\omega, \pi),$ let $F$ and $G$ be two nonzero functions in $H^{\8}_0 (\Sigma_\theta).$
\begin{enumerate}[{\rm (i)}]

\item There is a constant $K>0$ such that for any $f \in H^{\8} (\Sigma_\theta),$
\be
\left ( \int^{\8}_0 \| f (A) F (t A ) x \|^2 \frac{d t}{ t} \right )^{\frac{1}{2}} \le K \| f \|_{\8, \theta} \| x \|_G, \quad \forall x \in \mathbb{H}.
\ee

\item There is a constant $C>0$ such that
\be
C^{-1} \| x \|_G \le \| x \|_F \le C \| x \|_G,\quad \forall x \in \mathbb{H}.
\ee

\end{enumerate}
\end{thm}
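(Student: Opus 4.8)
The plan is to obtain (ii) as an immediate consequence of (i). Taking for $f$ the constant function $1 \in H^{\8}(\Sigma_\theta)$ in (i) gives $\|x\|_F \le K \|x\|_G$ for all $x \in \mathbb{H}$, while exchanging the roles of $F$ and $G$ yields the reverse bound $\|x\|_G \le K'\|x\|_F$; then (ii) holds with $C = \max\{K,K'\}$. So the entire content lies in (i).

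The engine for (i) is a Calderón-type reproducing formula. Since $\mathbb{H}$ is reflexive and $A$ is one-one, Remark~\ref{rk:HinftyCalculusReflexiveSpace} gives $\overline{\mathrm{R}(A)} = \mathbb{H}$, so $A$ has dense range and $\varphi(A) = A(1+A)^{-2}$ is one-one with dense range. I would set $\Psi(z) = \overline{G(\bar z)}$, which again lies in $H^{\8}_0(\Sigma_\theta)$ and satisfies $\int_0^{\8} G(t)\Psi(t)\frac{dt}{t} = \int_0^{\8}|G(t)|^2\frac{dt}{t} =: c_0 > 0$ (positivity because a nonzero $G$ cannot vanish on the ray $(0,\8)$). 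With $\psi = c_0^{-1} G\Psi \in H^{\8}_0(\Sigma_\theta)$ one has $\int_0^{\8}\psi(tz)\frac{dt}{t} = 1$ for every $z \in \Sigma_\theta$ (substitute $u=tz$ and rotate the ray back to $\mathbb{R}_+$ using the decay \eqref{eq:EstiH_0funct} and Cauchy's theorem). I would first prove the identity $\int_0^{\8}\psi(sA)x\frac{ds}{s} = x$ on the dense subspace $\mathrm{R}(\varphi(A))$: for $x = \varphi(A)y$ the truncations equal $(\eta_{\e,N}\varphi)(A)y$, where $\eta_{\e,N}(z) = \int_{\e}^N\psi(sz)\frac{ds}{s}$ is uniformly bounded and converges to $1$ on compact subsets of $\Sigma_\theta$, so dominated convergence in the contour integral \eqref{eq:f(A)} forces norm convergence to $\varphi(A)y = x$. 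The passage to all $x \in \mathbb{H}$ is then secured by McIntosh's convergence lemma together with dense range.

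Granting the formula $x = c_0^{-1}\int_0^{\8}\Psi(sA)G(sA)x\frac{ds}{s}$, I insert it into $f(A)F(tA)x$ and use the product rule $f(A)F(tA)\Psi(sA) = (fF_t\Psi_s)(A)$, where $F_t(z)=F(tz)$, $\Psi_s(z)=\Psi(sz)$, and $fF_t\Psi_s \in H^{\8}_0(\Sigma_\theta)$ since $f$ is bounded and $F_t\Psi_s$ decays. Estimating this operator through \eqref{eq:f(A)} and the resolvent bound \eqref{eq:EsitSectorialOper} gives
\beq
\|f(A)F(tA)\Psi(sA)\| \le C\,\|f\|_{\8,\theta}\int_{\Gamma_\gamma}|F(tz)\Psi(sz)|\frac{|dz|}{|z|} = C\,\|f\|_{\8,\theta}\,h(t/s),
\eeq
where, by the scaling $w = sz$, the quantity $h(r) = \int_{\Gamma_\gamma}|F(rw)\Psi(w)|\frac{|dw|}{|w|}$ depends only on the ratio $r = t/s$. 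The decay \eqref{eq:EstiH_0funct} of $F$ and $\Psi$ forces $h(r) \le C'\min\{r^{a}, r^{-a}\}$ for some $a>0$, hence $\int_0^{\8} h(r)\frac{dr}{r} < \8$, and therefore $\|f(A)F(tA)x\| \le C\,\|f\|_{\8,\theta}\int_0^{\8} h(t/s)\|G(sA)x\|\frac{ds}{s}$. The right-hand side is convolution by $h$ on the multiplicative group $\big((0,\8),\frac{dt}{t}\big)$, so by Young's inequality the map $g \mapsto \int_0^{\8} h(\cdot/s)g(s)\frac{ds}{s}$ is bounded on $L^2(\frac{dt}{t})$ with norm at most $\int_0^{\8} h(r)\frac{dr}{r}$; applied to $g(s) = \|G(sA)x\|$ it gives
\beq
\Big(\int_0^{\8}\|f(A)F(tA)x\|^2\frac{dt}{t}\Big)^{1/2} \le C\,\|f\|_{\8,\theta}\Big(\int_0^{\8} h(r)\frac{dr}{r}\Big)\|x\|_G = K\,\|f\|_{\8,\theta}\,\|x\|_G,
\eeq
first on the dense subspace and then for all $x$ by density, the bound being trivial when $\|x\|_G = \8$.

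The main obstacle is the rigorous justification of the reproducing formula for arbitrary $x \in \mathbb{H}$: one must produce uniform bounds for the truncated operators $\eta_{\e,N}(A)$ and invoke the convergence lemma, and it is precisely here that one-one-ness (equivalently, dense range on the reflexive space $\mathbb{H}$) is indispensable, since each $\psi(sA)$ annihilates $\mathrm{N}(A)$ and the limit $1$ could never be recovered there. By contrast, the almost-orthogonality estimate for $h$, the convolution bound, and the reduction of (ii) to (i) are routine.
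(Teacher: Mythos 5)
The paper never proves this statement: Theorem \ref{th:SquareFunctEquiv} is quoted as background, attributed to McIntosh/Yagi \cite{MY1989}, and the paper's own arguments only invoke it (and ``the proof of Theorem \ref{th:SquareFunctEquiv}(i)'', with a pointer to \cite{ADM1996, MY1989}) when establishing Theorem \ref{th:AreaIntFunctEquiv}. So the only meaningful comparison is with the cited sources, and your proposal reconstructs essentially that classical argument: deduce (ii) from (i) by taking $f=1$ (legitimate here because $A$ one-one on the reflexive space $\mathbb{H}$ gives dense range, so the extended calculus yields $1(A)=I$) and by symmetry in $F,G$; build a Calder\'on reproducing formula from $\Psi(z)=\overline{G(\bar z)}$; prove the scale-invariant almost-orthogonality bound $\|f(A)F(tA)\Psi(sA)\|\le C\|f\|_{\8,\theta}\,h(t/s)$ with $h(r)\le C'\min\{r^{a},r^{-a}\}$ via the contour integral \eqref{eq:f(A)} and \eqref{eq:EsitSectorialOper}; and conclude with Young's inequality for convolution on $\big((0,\8),\frac{dt}{t}\big)$. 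The outline and all the key estimates are sound.

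Two steps deserve tightening. First, the uniform operator bound $\sup_{\e,N}\|\eta_{\e,N}(A)\|<\8$, which you correctly single out as the crux, does not follow from the uniform sup-norm bound on the functions $\eta_{\e,N}$ (that would presuppose exactly the bounded $H^\8$ calculus one is not given). The standard device is to write $\eta_{\e,N}(z)=g(Nz)-g(\e z)$ with $g(w)=\int_0^1\psi(sw)\,\frac{ds}{s}$, observe that $g(Nz)-\frac{Nz}{1+Nz}$ lies in $H^\8_0(\Sigma_\theta)$ with a contour-integral bound \emph{independent of} $N$ (by scale invariance of $\Gamma_\gamma$ and of the measure $|dz|/|z|$), and bound $NA(1+NA)^{-1}$ uniformly by sectoriality; naming the convergence lemma is fine, but this comparison is the actual content needed to apply it. Second, your closing ``then for all $x$ by density'' should be dropped or reformulated: once the reproducing formula is known for \emph{every} $x\in\mathbb{H}$ (which is precisely what your convergence-lemma step delivers), the estimate holds for every $x$ directly. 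As a stand-alone argument, extending the inequality from a dense subspace by generic approximation would fail, because $x\mapsto\|x\|_G$ is only lower semicontinuous under norm convergence; one would have to use commuting mollifiers $x_n=\theta_n(A)x$ with $\sup_n\|\theta_n(A)\|<\8$, e.g.\ $\theta_n(z)=nz(1+nz)^{-1}(1+z/n)^{-1}$, so that $\|x_n\|_G\le C\|x\|_G$, rather than an arbitrary approximating sequence.
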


Let $G \in H^{\8}_0 (\Sigma_{\omega +}).$ We denote by $\| \cdot \|^*_G$ the square function for $G$ associated with the adjoint operator $A^*,$ that is,
\be
\| x \|^*_G = \left ( \int^{\8}_0 \| G (t A^* ) x \|^2 \frac{d t}{ t} \right )^{\frac{1}{2}}, \quad \forall x \in \mathbb{H}.
\ee
The following theorem establishes the close connection between $H^{\8}$ functional calculus and square functions on Hilbert spaces.

\begin{thm}\label{th:SquareFunctHinftyCalculus}{\rm (McIntosh \cite{M1986})}
Let $\mathbb{H}$ be a Hilbert space. Let $A$ be a sectorial operator of type $\omega \in (0, \pi)$ on $\mathbb{H},$ and suppose that $A$ is one-one. Given $\theta \in (\omega, \pi),$ the following assertions are equivalent:
\begin{enumerate}[{\rm (i)}]

\item $A$ has a bounded $H^\infty(\Sigma_\theta)$ functional calculus.

\item For some (equivalently, for any) pair $(F, G)$ of nonzero functions in $H^{\8}_0 (\Sigma_{\omega +}),$ there is a constant $K>0$ such that
\be
\| x\|_F \le K \| x \| \quad \text{and}\quad \|x\|^*_G \le K \|x\|
\ee
for all $x \in \mathbb{H}.$

\item For some (equivalently, for any) nonzero function $F \in H^{\8}_0 (\Sigma_{\omega +}),$ there is a constant $C>0$ such that
\be
C^{-1} \| x \| \le \| x \|_F \le C \| x \|,\quad \forall x \in \mathbb{H}.
\ee

\end{enumerate}
\end{thm}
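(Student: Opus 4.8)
The plan is to establish the four implications (i) $\Rightarrow$ (ii), (ii) $\Rightarrow$ (iii), (iii) $\Rightarrow$ (ii) and (ii) $\Rightarrow$ (i), which together give (i) $\Leftrightarrow$ (ii) $\Leftrightarrow$ (iii). The ``for some'' versus ``for any'' clauses will be handled automatically: each implication \emph{starting} from (i) will be proved for an arbitrary admissible pair $(F,G)$ (resp.\ function $F$), while each implication \emph{ending} at (i) uses only one such choice; since (i) refers to no choice of $F,G$, this upgrades ``for some'' to ``for any''. I would rely on three devices. First, the Calderón reproducing formula: for $F,G\in H^\8_0(\Sigma_\theta)$ with $c=\int_0^\8 F(t)G(t)\frac{dt}{t}\neq0$ one has $x=\frac1c\int_0^\8 G(tA)F(tA)x\,\frac{dt}{t}$ for every $x\in\overline{\mathrm{R}(A)}$, and $\overline{\mathrm{R}(A)}=\mathbb H$ because $A$ is one-one on a Hilbert space (Remark \ref{rk:HinftyCalculusReflexiveSpace}). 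Second, the exact Khintchine identity $\mathbb E_\varepsilon\big\|\sum_j\varepsilon_j u_j\big\|^2=\sum_j\|u_j\|^2$ on a Hilbert space. Third, the duality $G(tA)^*=\tilde G(tA^*)$ together with Remark \ref{rk:HinftyFunctDuality}, so that $A^*$ inherits a bounded $H^\8(\Sigma_\theta)$ calculus whenever $A$ has one.

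For (i) $\Rightarrow$ (ii) I would discretize. Fix $s\in[1,2]$ and signs $\varepsilon=(\varepsilon_n)$; the finite sums $g^{(N)}_{\varepsilon,s}(z)=\sum_{|n|\le N}\varepsilon_n F(2^n s z)$ lie in $H^\8_0(\Sigma_\theta)$, and the decay \eqref{eq:EstiH_0funct} of $F$ gives $\|g^{(N)}_{\varepsilon,s}\|_{\8,\theta}\le\sup_{z}\sum_n|F(2^n s z)|\le C$ uniformly in $N,\varepsilon,s$. Applying the calculus bound \eqref{eq:HinftyInequa} to $g^{(N)}_{\varepsilon,s}(A)x$, taking the expectation over $\varepsilon$ and using the Khintchine identity yields $\sum_{|n|\le N}\|F(2^n sA)x\|^2\le C^2K^2\|x\|^2$; letting $N\to\8$ (monotone convergence) and integrating $\frac{ds}{s}$ over $[1,2]$ after writing $\int_0^\8\|F(tA)x\|^2\frac{dt}{t}=\int_1^2\big(\sum_n\|F(2^n sA)x\|^2\big)\frac{ds}{s}$ gives $\|x\|_F\le C'\|x\|$. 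The companion bound $\|x\|^*_G\le C'\|x\|$ follows by repeating the argument for $A^*$, which is legitimate by Remark \ref{rk:HinftyFunctDuality}.

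For (ii) $\Rightarrow$ (i) and for the lower bound in (iii) I would feed the reproducing formula into an inner product and apply Cauchy--Schwarz in $L^2(\frac{dt}{t})$. Writing $\langle f(A)x,y\rangle=\frac1c\int_0^\8\langle f(A)F(tA)x,\tilde G(tA^*)y\rangle\frac{dt}{t}$ and using Cauchy--Schwarz bounds this by $\frac1{|c|}\big(\int_0^\8\|f(A)F(tA)x\|^2\frac{dt}{t}\big)^{1/2}\|y\|^*_{\tilde G}$; the first factor is $\le K\|f\|_{\8,\theta}\|x\|_F$ by Theorem \ref{th:SquareFunctEquiv}(i) (taking the reference function there to be $F$), while $\|x\|_F\le K\|x\|$ and $\|y\|^*_{\tilde G}\le K\|y\|$ by (ii) and the equivalence of all square functions, Theorem \ref{th:SquareFunctEquiv}(ii), applied to $A$ and to $A^*$. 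Hence $\|f(A)\|\le\mathrm{const}\cdot\|f\|_{\8,\theta}$, which is (i). Taking $f=1$ and $G=F$ in the same identity gives $\|x\|^2\le\frac1{|c|}\|x\|_F\,\|x\|^*_{\tilde G}$, so that the bound $\|x\|^*_{\tilde G}\le C\|x\|$ supplied by (ii) (via Theorem \ref{th:SquareFunctEquiv}(ii) for $A^*$) produces the lower bound $\|x\|\le C'\|x\|_F$; combined with the upper bound this is (iii).

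The delicate step is (iii) $\Rightarrow$ (ii), where from the \emph{primal} lower bound $C^{-1}\|x\|\le\|x\|_F$ I must extract the \emph{adjoint} upper bound $\|y\|^*_{\tilde G}\le K\|y\|$. I would apply the lower bound to $w=\int_0^\8 G(sA)g(s)\frac{ds}{s}$ for $g\in L^2(\mathbb R_+,\frac{ds}{s};\mathbb H)$ and compute $\|w\|_F^2=\int_0^\8\big\|\int_0^\8 F(tA)G(sA)g(s)\frac{ds}{s}\big\|^2\frac{dt}{t}$. The main obstacle is the off-diagonal estimate $\|F(tA)G(sA)\|\le K\,\eta(s/t)$ with $\int_0^\8\eta(r)\frac{dr}{r}<\8$, which follows from the sectorial bound \eqref{eq:EsitSectorialOper} and the decay \eqref{eq:EstiH_0funct} by a contour deformation and is the kernel estimate underlying Theorem \ref{th:SquareFunctEquiv}. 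Granting it, a Schur test on $L^2(\mathbb R_+,\frac{dt}{t})$ gives $\|w\|_F\le K\|\eta\|_1\|g\|$, whence $\|w\|\le C\|w\|_F\le CK\|\eta\|_1\|g\|$; since $w$ realizes the adjoint of $y\mapsto(\tilde G(tA^*)y)_t$, this is exactly $\|y\|^*_{\tilde G}\le CK\|\eta\|_1\|y\|$. Together with the primal upper bounds (from the upper half of (iii) and Theorem \ref{th:SquareFunctEquiv}(ii)) this gives (ii) and closes the equivalences.
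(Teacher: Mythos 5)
The first thing to say is that the paper contains no proof of this statement: Theorem \ref{th:SquareFunctHinftyCalculus} is quoted verbatim as background, attributed to McIntosh \cite{M1986} (just as Theorem \ref{th:SquareFunctEquiv} is quoted from \cite{MY1989}), and is then used as a black box in the two-line proof of Theorem \ref{th:AreIntFunctHinftyCalculus}. So there is nothing internal to compare your argument against; it has to be judged against the classical argument, and in substance it \emph{is} the classical argument: Rademacher randomization plus the exact Hilbert-space Khintchine identity for (i) $\Rightarrow$ (ii); the Calder\'on reproducing formula paired against a vector plus Cauchy--Schwarz for (ii) $\Rightarrow$ (i) and for the lower bound in (iii); and, for the delicate step (iii) $\Rightarrow$ (ii), duality --- realizing $y \mapsto (\tilde G(tA^*)y)_t$ as the adjoint of $g \mapsto \int_0^\8 G(sA)g(s)\frac{ds}{s}$ --- combined with the almost-orthogonality estimate $\|F(tA)G(sA)\| \le K\eta(s/t)$ and a Schur/Young bound on $L^2(\mathbb{R}_+,\frac{dt}{t})$. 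Your bookkeeping of the ``for some/for any'' clauses through the unconditional equivalence Theorem \ref{th:SquareFunctEquiv}(ii), applied to both $A$ and $A^*$ (the latter is legitimate: $A$ one-one and sectorial on a Hilbert space has dense range, so $A^*$ is one-one), is also correct, and your reliance on Theorem \ref{th:SquareFunctEquiv} as a given is consistent with how the paper itself treats it.

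Three technical points need repair, all fixable with tools you already invoke. (a) In (i) $\Rightarrow$ (ii) you claim $g^{(N)}_{\varepsilon,s} \in H^\8_0(\Sigma_\theta)$; this holds only if $F$ itself is analytic on $\Sigma_\theta$, whereas (ii) allows $F \in H^\8_0(\Sigma_{\theta'})$ with $\omega < \theta' \le \theta$, in which case the randomized sums cannot be fed into the $H^\infty(\Sigma_\theta)$ calculus bound. Run the randomization for one fixed function such as $\varphi(z) = z(1+z)^{-2} \in H^\8_0(\Sigma_\theta)$ and then transfer to arbitrary $F \in H^\8_0(\Sigma_{\omega+})$ by Theorem \ref{th:SquareFunctEquiv}(ii) at a common angle $\min(\theta,\theta')$. (b) The normalization $c = \int_0^\8 F(t)G(t)\frac{dt}{t} \ne 0$ is not automatic; in particular your choice ``$G=F$'' in the (ii) $\Rightarrow$ (iii) step can give $c=0$ by cancellation. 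The standard fix is $G = \tilde F$, since then $c = \int_0^\8 |F(t)|^2\frac{dt}{t} > 0$ (a nonzero analytic $F$ cannot vanish identically on $(0,\8)$). (c) You assert the reproducing formula $x = \frac1c\int_0^\8 G(tA)F(tA)x\,\frac{dt}{t}$ for \emph{every} $x \in \overline{\mathrm{R}(A)} = \mathbb{H}$; this is true (McIntosh's approximate identity) but is itself a nontrivial lemma whose proof requires a uniform bound on the truncated integrals. Either cite it, or note that your argument only needs the weak identity $\langle x,y\rangle = \frac1c\int_0^\8 \langle F(tA)x, \tilde G(tA^*)y\rangle \frac{dt}{t}$, which follows for $x$ in the dense set $\mathrm{R}(\varphi(A)) \subset \mathrm{D}(A)\cap\mathrm{R}(A)$ by absolute convergence and Fubini with the defining contour integrals, and then extends to all $x$ because both sides are continuous in $x$ under the square-function bounds available in hypotheses (ii) and (iii). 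With these emendations the proof is complete.
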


Consequently, for a sectorial operator $A$ of type $\omega \in (0, \pi)$ on a Hilbert space $\mathbb{H},$ if $A$ has a bounded $H^\infty(\Sigma_\theta)$ functional calculus for some $\theta \in (\omega, \pi)$ then it has a bounded $H^\infty(\Sigma_\theta)$ functional calculus for all $\theta \in (\omega, \pi).$ In this case, we simply say that $A$ has a bounded $H^\infty$ functional calculus.

\begin{remark}\label{rk:SquareFunctHinftyCalculus}\rm
$A$ is said to satisfy a square function estimate if for some (equivalently, for any) $F \in H^{\8}_0 (\Sigma_{\omega +}),$ there is a constant $C>0$ such that $\| x \|_F \le C \| x \|$ for all $x \in \mathbb{H}.$ As a consequence of Theorem \ref{th:SquareFunctHinftyCalculus} (and Remark \ref{rk:HinftyCalculusReflexiveSpace}), $A$ has a bounded $H^\infty$ functional calculus if and only if both $A$ and $A^*$ satisfy a square function estimate. Note that an example was given in \cite{LeM2003} of a sectorial operator $A$ which satisfies a square function estimate, but does not have a bounded $H^\infty$ functional calculus.
\end{remark}

Our goal of this paper is to extend Theorems \ref{th:SquareFunctEquiv} and \ref{th:SquareFunctHinftyCalculus} to the case where square functions are replaced by so-called area integral functions defined below.

\section{Area integral functions}\label{AreaFunct}

First of all, we introduce so-called area integral functions associated with sectorial operators on Hilbert spaces.

\begin{defi}\label{df:AreaFunct}
Let $\omega \in (0, \pi)$ and $\theta \in (\omega, \pi).$ Let $A$ be a sectorial operator of type $\omega$ on a Hilbert space $\mathbb{H}.$ Given $0 < \alpha < \frac{\theta - \omega}{2},$ for any $F \in H^\infty_0(\Sigma_{\theta +})$ we define
\beq\label{eq:AreaFunct}
\| x \|_{F, \alpha}: = \left ( \int_{\Sigma_\alpha} \| F (z A ) x \|^2 \frac{d m(z)}{|z|^2} \right )^{\frac{1}{2}}, \quad \forall x \in \mathbb{H},
\eeq
where $d m$ is the Lebesgue measure in $\mathbb{R}^2 \cong \mathbb{C}.$ Here, $F (z A )$ is understood as $F_z (A)$ where $F_z (w) = F (z w)$ for $w \in \Sigma_{\theta-\alpha}.$

We will call $\| x \|_{F, \alpha}$ the area integral function associated with $A.$
\end{defi}

Evidently, for any $z \in \Sigma_\alpha$ one has
\be
F_z \in H^\infty_0(\Sigma_{\theta -\alpha}) \subset H^\infty_0(\Sigma_{\omega +}).
\ee
Also, by Lebesgue's dominated theorem, for any $x \in \mathbb{H}$ the mapping $z \mapsto F_z (A) x$ is continuous from $\Sigma_\alpha$ into $\mathbb{H}.$ Hence, $\| x \|_{F, \alpha}$ is well defined but possibly $\| x \|_{F, \alpha} = \8.$

The corresponding area integral function associated with $A^*$ is defined as
\beq\label{eq:AreaFunctDualOperator}
\| x \|^*_{F, \alpha}: = \left ( \int_{\Sigma_\alpha} \| F (z A^* ) x \|^2 \frac{d m(z)}{|z|^2} \right )^{\frac{1}{2}}, \quad \forall x \in \mathbb{H}.
\eeq

Our main results read as follows.

\begin{thm}\label{th:AreaIntFunctEquiv}
Let $\mathbb{H}$ be a Hilbert space. Let $A$ be a sectorial operator of type $\omega \in (0, \pi)$ on $\mathbb{H},$ and suppose that $A$ is one-one. Given $\theta \in (\omega, \pi)$ and $0 < \alpha, \beta < \frac{\theta - \omega}{2},$ let $F$ and $G$ be two nonzero functions in $H^{\8}_0 (\Sigma_\theta).$
\begin{enumerate}[{\rm (i)}]

\item There is a constant $K>0$ such that for any $f \in H^{\8} (\Sigma_\theta),$
\be
\left ( \int_{\Sigma_\alpha} \| f (A) F (z A ) x \|^2 \frac{d m(z)}{|z|^2} \right )^{\frac{1}{2}} \le K \| f \|_{\8, \theta} \| x \|_{G, \beta}, \quad \forall x \in \mathbb{H}.
\ee

\item There is a constant $C>0$ such that
\be
C^{-1} \| x \|_{G, \beta} \le \| x \|_{F, \alpha} \le C \| x \|_{G, \beta},\quad \forall x \in \mathbb{H}.
\ee

\end{enumerate}
\end{thm}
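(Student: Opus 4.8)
The plan is to reduce everything to the square function theory of Theorem~\ref{th:SquareFunctEquiv} by passing to polar coordinates on the sector $\Sigma_\alpha$. Writing $z = t e^{\mathrm{i}\phi}$ with $t > 0$ and $|\phi| < \alpha$, we have $d m(z) = t\, dt\, d\phi$ and $|z|^2 = t^2$, so that $\frac{dm(z)}{|z|^2} = \frac{dt}{t}\, d\phi$. Setting $F_{e^{\mathrm{i}\phi}}(w) = F(e^{\mathrm{i}\phi} w)$, one checks that $(F_{e^{\mathrm{i}\phi}})_t(w) = F(t e^{\mathrm{i}\phi} w) = F_z(w)$, hence $F(zA) = F_{e^{\mathrm{i}\phi}}(tA)$ as operators, and Tonelli's theorem yields the basic identity
\beq
\| x \|_{F, \alpha}^2 = \int_{-\alpha}^{\alpha} \Big( \int_0^{\8} \| F_{e^{\mathrm{i}\phi}}(t A) x \|^2 \frac{dt}{t} \Big)\, d\phi = \int_{-\alpha}^{\alpha} \| x \|_{F_{e^{\mathrm{i}\phi}}}^2\, d\phi,
\eeq
which exhibits the area integral function as an angular average of the square functions attached to the rotated symbols $F_{e^{\mathrm{i}\phi}}$. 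The key point is that for every $|\phi| \le \alpha$ one has $F_{e^{\mathrm{i}\phi}} \in H^\infty_0(\Sigma_{\theta - \alpha})$ with $\theta - \alpha > \omega$ (since $\alpha < \frac{\theta-\omega}{2}$); moreover the rotation leaves the decay constants $s, c$ of \eqref{eq:EstiH_0funct} unchanged and satisfies $\| F_{e^{\mathrm{i}\phi}} \|_{\8, \theta - \alpha} \le \| F \|_{\8, \theta}$, so the whole family $\{ F_{e^{\mathrm{i}\phi}} \}_{|\phi| \le \alpha}$ carries uniform symbol data. The same holds for $G$ with $\beta$ in place of $\alpha$.

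First I would establish a square--area comparison: for every nonzero $F \in H^\infty_0(\Sigma_\theta)$ there is a constant $C > 0$ with $C^{-1} \| x \|_G \le \| x \|_{F, \alpha} \le C \| x \|_G$ for all $x$, where $\| \cdot \|_G$ is the fixed square function of $G$ and $G \in H^{\8}_0(\Sigma_\theta) \subset H^{\8}_0(\Sigma_{\theta-\alpha})$ by restriction. Granting the equivalence $\| x \|_{F_{e^{\mathrm{i}\phi}}} \asymp \| x \|_G$ of Theorem~\ref{th:SquareFunctEquiv}(ii), applied on the sector $\Sigma_{\theta - \alpha}$ to the pair $(F_{e^{\mathrm{i}\phi}}, G)$ with a constant uniform in $\phi$, I would integrate this against $d\phi$ over $(-\alpha, \alpha)$ using the basic identity; the resulting factor $2\alpha$ is harmless and the comparison follows. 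Applying this comparison to $F$ with $\alpha$ and to $G$ with $\beta$, and chaining through the common reference $\| \cdot \|_G$, then gives $\| x \|_{F, \alpha} \asymp \| x \|_{G, \beta}$, which is statement (ii).

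For statement (i), I would again insert the polar decomposition so that the left-hand side squared equals $\int_{-\alpha}^{\alpha} \big( \int_0^{\8} \| f(A) F_{e^{\mathrm{i}\phi}}(tA) x \|^2 \frac{dt}{t} \big)\, d\phi$. For each fixed $\phi$, Theorem~\ref{th:SquareFunctEquiv}(i) --- applied on $\Sigma_{\theta - \alpha}$ to the symbol $F_{e^{\mathrm{i}\phi}}$, the reference $G$, and the restriction $f|_{\Sigma_{\theta - \alpha}}$, whose sup-norm is dominated by $\| f \|_{\8, \theta}$ --- bounds the inner integral by $K_\phi^2 \| f \|_{\8, \theta}^2 \| x \|_G^2$. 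With $K_\phi$ uniform in $\phi$, integration over $(-\alpha, \alpha)$ produces the bound $2\alpha K_0^2 \| f \|_{\8, \theta}^2 \| x \|_G^2$; finally the square--area comparison of the previous paragraph, taken with $F = G$, gives $\| x \|_G \le C \| x \|_{G, \beta}$ and thereby replaces $\| x \|_G$ by $\| x \|_{G, \beta}$, completing (i).

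The main obstacle is the uniformity of the McIntosh/Yagi constants over the compact range $\phi \in [-\alpha, \alpha]$ (and $\psi \in [-\beta,\beta]$); without it the angular integrals above cannot be controlled. I expect to resolve this by inspecting the proof of Theorem~\ref{th:SquareFunctEquiv}: its constants are assembled from the Calderón-type reproducing formula for the fixed reference $G$, a Schur-type off-diagonal estimate, the fixed sectorial bound $K_{\theta - \alpha}$ of \eqref{eq:EsitSectorialOper}, and the symbol data (the exponent $s$, the constant $c$, and the sup-norm) of the first function --- all of which are, as noted, either fixed or uniformly bounded across the family $\{ F_{e^{\mathrm{i}\phi}} \}$. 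Equivalently, since $\phi \mapsto F_{e^{\mathrm{i}\phi}}$ is continuous into $H^\infty_0(\Sigma_{\theta - \alpha})$ and the constant depends continuously on the symbol, it stays bounded on the compact interval. Once this uniform control is secured, the Tonelli interchange and the routine passages between $\| \cdot \|_G$, $\| \cdot \|_{F, \alpha}$ and $\| \cdot \|_{G, \beta}$ finish the proof.
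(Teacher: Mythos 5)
Your proposal is correct in substance, and for the core estimate (i) it is essentially the paper's own argument: the polar--coordinate identity $\frac{dm(z)}{|z|^2}=\frac{dt}{t}\,d\phi$ together with the claim that the McIntosh/Yagi constant of Theorem \ref{th:SquareFunctEquiv}(i) is uniform over the rotated symbols $F_{e^{\mathrm{i}\phi}}$, $|\phi|\le\alpha$ (uniform because rotation preserves the $H^{\8}_0$ decay data on the smaller sector $\Sigma_{\theta-\alpha}$), is exactly how the paper obtains its inequality \eqref{eq:Area<SquareFunct}. Where you genuinely diverge is in the reverse comparison, the bound of the square function by the area function needed to pass from $\|x\|_G$ to $\|x\|_{G,\beta}$ and to get the lower half of (ii). The paper proves this as Lemma \ref{le:SquareAreaFunct} by a short, self-contained argument using only analyticity of $z\mapsto F(zA)x$: the sub-mean-value inequality over discs $D_t\subset\Sigma_\alpha$ centered at $t$ gives $\|F(tA)x\|^2\le C_\alpha t^{-2}\int_{D_t}\|F(zA)x\|^2\,dm(z)$, and integrating in $t$ yields $\|x\|_F\le\frac{2}{\sqrt{\pi\sin\alpha}}\,\|x\|_{F,\alpha}$ \eqref{eq:Square<AreaFunct}, with an explicit constant and no appeal to square-function theory at all. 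You instead integrate the two-sided equivalence $\|x\|_{F_{e^{\mathrm{i}\phi}}}\asymp\|x\|_G$ over $\phi$. That can be made to work, but note a subtlety your uniformity discussion glosses over: the direction you actually need, $\|x\|_G\le C\,\|x\|_{F_{e^{\mathrm{i}\phi}}}$, comes from Theorem \ref{th:SquareFunctEquiv}(i) with the roles swapped, so the rotated function is now the \emph{reference}, and the Calder\'on reproducing formula must be built for $F_{e^{\mathrm{i}\phi}}$, not for ``the fixed reference $G$'' as you wrote. This is repairable --- if $H$ is a companion of $F$ with $\int_0^{\8}F(tz)H(tz)\,\frac{dt}{t}=1$ on $\Sigma_\theta$, then the rotated companion $H_{e^{\mathrm{i}\phi}}$ serves $F_{e^{\mathrm{i}\phi}}$ with identical bounds; alternatively one can absorb the rotation into the operator, since $e^{\mathrm{i}\phi}A$ is sectorial of type $\omega+\alpha$ with resolvent bounds uniform in $|\phi|\le\alpha$ --- whereas your fallback continuity-plus-compactness argument is not rigorous as stated (the optimal constant is not a function of the symbol known to vary continuously). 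In short, your route trades the paper's elementary mean-value lemma for a second, heavier pass through McIntosh/Yagi with rotated reference functions; both are valid, but the paper's Lemma \ref{le:SquareAreaFunct} gives explicit constants, needs no extra uniformity beyond what part (i) already requires, and is the one genuinely new technical ingredient of the whole proof.
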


\begin{thm}\label{th:AreIntFunctHinftyCalculus}
Let $\mathbb{H}$ be a Hilbert space. Let $A$ be a sectorial operator of type $\omega \in (0, \pi)$ on $\mathbb{H},$ and suppose that $A$ is one-one. Given $\theta \in (\omega, \pi)$ and $0 < \alpha < \frac{\theta - \omega}{2},$ the following assertions are equivalent:
\begin{enumerate}[{\rm (i)}]

\item $A$ has a bounded $H^\infty(\Sigma_\theta)$ functional calculus.

\item For some (equivalently, for any) pair $(F, G)$ of nonzero functions in $H^{\8}_0 (\Sigma_{(\omega + \alpha) +}),$ there is a constant $K>0$ such that
\be
\| x \|_{F, \alpha} \le K \| x \| \quad \text{and}\quad \| x \|^*_{G, \alpha} \le K \|x\|
\ee
for all $x \in \mathbb{H}.$

\item For some (equivalently, for any) nonzero function $F \in H^{\8}_0 (\Sigma_{(\omega + \alpha) +}),$ there is a constant $C>0$ such that
\be
C^{-1} \| x \| \le \| x \|_{F, \alpha} \le C \| x \|,\quad \forall x \in \mathbb{H}.
\ee

\end{enumerate}
\end{thm}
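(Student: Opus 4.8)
The strategy is to deduce Theorem~\ref{th:AreIntFunctHinftyCalculus} from McIntosh's square function characterization, Theorem~\ref{th:SquareFunctHinftyCalculus}, by showing that for a one-one sectorial operator each area integral function is two-sidedly comparable to an ordinary square function. The bridge is an angular slicing of the integral \eqref{eq:AreaFunct} in polar coordinates. Writing $z=re^{i\phi}$ with $r>0$ and $\phi\in(-\alpha,\alpha)$, we have $dm(z)=r\,dr\,d\phi$ and $|z|^2=r^2$, so that $\frac{dm(z)}{|z|^2}=\frac{dr}{r}\,d\phi$, while $F(zA)=F_{e^{i\phi}}(rA)$ with $F_{e^{i\phi}}(w)=F(e^{i\phi}w)$. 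Fubini's theorem then yields
\beq\label{eq:angular-slice}
\|x\|_{F,\alpha}^2=\int_{-\alpha}^{\alpha}\int_0^{\infty}\|F_{e^{i\phi}}(rA)x\|^2\,\frac{dr}{r}\,d\phi=\int_{-\alpha}^{\alpha}\|x\|_{F_{e^{i\phi}}}^2\,d\phi,
\eeq
and likewise $\big(\|x\|^*_{G,\alpha}\big)^2=\int_{-\alpha}^{\alpha}\big(\|x\|^*_{G_{e^{i\phi}}}\big)^2\,d\phi$. Thus the area integral function is precisely the $L^2$-average, over the aperture $(-\alpha,\alpha)$, of the square functions attached to the rotated symbols $F_{e^{i\phi}}$.

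Next I would check that the rotated family is uniformly admissible. Since $F\in H^\infty_0(\Sigma_{\theta'})$ for some $\theta'>\omega+\alpha$ and $|\phi|<\alpha$, each $F_{e^{i\phi}}$ lies in $H^\infty_0(\Sigma_{\theta'-\alpha})\subset H^\infty_0(\Sigma_{\omega+})$; moreover the rotation is an isometry for the supremum norm, $\|F_{e^{i\phi}}\|_{\infty,\theta'-\alpha}\le\|F\|_{\infty,\theta'}$, and it leaves the decay bound \eqref{eq:EstiH_0funct} invariant, since $|F_{e^{i\phi}}(w)|=|F(e^{i\phi}w)|$ satisfies the same estimate with the same constants $c,s$. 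Hence $\{F_{e^{i\phi}}:|\phi|\le\alpha\}$ is a bounded subset of $H^\infty_0(\Sigma_{\theta'-\alpha})$ with uniform decay data, and the same holds for the $G_{e^{i\phi}}$ relative to $A^*$ (note that $A$ one-one forces $A^*$ one-one, by Remark~\ref{rk:HinftyCalculusReflexiveSpace}). Applying the square function equivalence of Theorem~\ref{th:SquareFunctEquiv}(ii) to each pair $(F_{e^{i\phi}},F)$ gives
\beq\label{eq:uniform-sq}
c_0\,\|x\|_F\le\|x\|_{F_{e^{i\phi}}}\le C_0\,\|x\|_F,\qquad x\in\mathbb{H},
\eeq
and the crux of the whole argument is that $c_0,C_0$ can be taken independent of $\phi\in(-\alpha,\alpha)$. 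This is where the real work lies: inspecting the McIntosh--Yagi reproducing formula underlying Theorem~\ref{th:SquareFunctEquiv}, the comparison constant for a pair of symbols is dominated by a kernel built from contour integrals of their moduli along $\Gamma_\gamma$; replacing $F$ by $F_{e^{i\phi}}$ merely tilts that contour by the angle $\phi$ inside the fixed sector $\Sigma_{\theta'-\alpha}$, and by the sectoriality estimate \eqref{eq:EsitSectorialOper} this perturbs the kernel by at most a factor depending only on $\alpha$ and $K_{\theta'-\alpha}$. Inserting \eqref{eq:uniform-sq} into \eqref{eq:angular-slice} and integrating in $\phi$ then produces the bridge
\beq\label{eq:bridge}
\sqrt{2\alpha}\,c_0\,\|x\|_F\le\|x\|_{F,\alpha}\le\sqrt{2\alpha}\,C_0\,\|x\|_F,\qquad x\in\mathbb{H},
\eeq
together with its adjoint analogue, the two-sided comparison of $\|x\|^*_{G,\alpha}$ with $\|x\|^*_G$.

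With the bridge \eqref{eq:bridge} in hand, Theorem~\ref{th:AreIntFunctHinftyCalculus} reduces to a transcription of Theorem~\ref{th:SquareFunctHinftyCalculus}. Indeed, \eqref{eq:bridge} turns the estimate $\|x\|_{F,\alpha}\le K\|x\|$ of~(ii) into the square function estimate $\|x\|_F\le K'\|x\|$, and likewise $\|x\|^*_{G,\alpha}\le K\|x\|$ into $\|x\|^*_G\le K'\|x\|$, while the two-sided bound of~(iii) becomes $(C')^{-1}\|x\|\le\|x\|_F\le C'\|x\|$; conversely these square function estimates return the corresponding area estimates. Since $F_{e^{i\phi}},F\in H^\infty_0(\Sigma_{\omega+})$, Theorem~\ref{th:SquareFunctHinftyCalculus} applies and yields the equivalence of~(i), (ii) and~(iii), the freedom in $\theta\in(\omega,\pi)$ being supplied by the remark following that theorem. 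The clause \emph{some (equivalently, for any)} follows by combining \eqref{eq:bridge} for two symbols $F,F'$ with the square function equivalence (Theorem~\ref{th:SquareFunctEquiv}) between $\|\cdot\|_F$ and $\|\cdot\|_{F'}$, which is also exactly the content of Theorem~\ref{th:AreaIntFunctEquiv}(ii). The only genuinely nontrivial point in the scheme is the uniformity of the constants in \eqref{eq:uniform-sq}; once that is secured, everything else is bookkeeping resting on Theorems~\ref{th:SquareFunctEquiv} and~\ref{th:SquareFunctHinftyCalculus}.
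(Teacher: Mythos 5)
Your overall architecture coincides with the paper's: both proofs rest on the polar-coordinate identity $\|x\|_{F,\alpha}^2=\int_{-\alpha}^{\alpha}\int_0^\infty\|F(te^{i\phi}A)x\|^2\,\frac{dt}{t}\,d\phi$, a two-sided comparison between the area integral function and an ordinary square function, and then a direct appeal to Theorem \ref{th:SquareFunctHinftyCalculus}. Moreover, for the direction $\|x\|_{F,\alpha}\le C\|x\|_F$ your argument is the same as the paper's: both rely on the fact that the constant in the McIntosh--Yagi estimate of Theorem \ref{th:SquareFunctEquiv}(i) is uniform over the rotated symbols $F_{e^{i\phi}}$, $|\phi|<\alpha$, because the rotation preserves the decay data \eqref{eq:EstiH_0funct} inside a fixed common sector; this is exactly how the paper obtains \eqref{eq:Area<SquareFunct}.

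The divergence, and the gap, is in the opposite direction $\|x\|_F\le C\|x\|_{F,\alpha}$. You derive it from the uniform lower bound $c_0\|x\|_F\le\|x\|_{F_{e^{i\phi}}}$, and you rightly call this uniformity the crux, but your justification does not reach it. In this direction the rotated symbol occupies the ``$G$-slot'' of Theorem \ref{th:SquareFunctEquiv}(i), so the comparison constant depends on $F_{e^{i\phi}}$ not only through contour/kernel upper bounds (which are indeed rotation-invariant) but also through the non-degeneracy of the Calder\'on reproducing normalization, i.e.\ a lower bound on $\int_0^\infty|F(te^{i\phi})|^2\,\frac{dt}{t}$ uniform in $|\phi|\le\alpha$; closing this requires, e.g., a continuity-and-compactness argument in $\phi$, or a rotation-covariance argument for the auxiliary function in the reproducing formula. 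Your tilted-contour remark, and in particular the appeal to the resolvent estimate \eqref{eq:EsitSectorialOper} --- which is not affected at all when the symbol is rotated --- says nothing about this normalization. The paper sidesteps the whole issue with Lemma \ref{le:SquareAreaFunct}: since $z\mapsto F(zA)x$ is analytic on $\Sigma_\alpha$, the mean value property over the discs $D_t$ centered at $t$ and inscribed in the sector gives $\|F(tA)x\|^2\le\frac{C_\alpha}{t^2}\int_{D_t}\|F(zA)x\|^2\,dm(z)$, and Fubini then yields $\|x\|_F\le\frac{2}{\sqrt{\pi\sin\alpha}}\,\|x\|_{F,\alpha}$ with the \emph{same} symbol and no uniformity question whatsoever. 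If you replace your uniform lower comparison by this subharmonicity argument (keeping the rest of your bookkeeping, including the adjoint versions for $A^*$, which is one-one and sectorial of the same type), your proof becomes complete and is then essentially the paper's.
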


\begin{ex}
As similar to the square functions that are used in Stein's book \cite{Stein1970}, area integral functions associated with sectorial operators originate naturally in harmonic analysis. We mention a few classical ones for illustrations. For any $k \ge 1,$ let
\be
G_k = z^k e^{-z},\quad \forall z \in \mathbb{C}.
\ee
Then $G_k \in H^{\8}_0 (\Sigma_{\omega +})$ for any $\omega \in (0, \frac{\pi}{2}).$ Hence, if $A$ is a sectorial operator of type $\omega$ on a Hilbert space for some $\omega \in (0, \frac{\pi}{2}),$ then $G_k$ gives rise area integral functions associated with $A.$ Indeed, if $(T_t)_{t \ge 0}$ is the bounded analytic semigroup generated by $- A,$ we have
\be
G_k (z A)x = z^k A^k e^{-z A} x = (-z)^k \frac{\partial^k}{\partial z^k} (T_z x), \quad z \in \Sigma_{\frac{\pi}{2}- \omega} \; \text{and}\; x \in \mathbb{H}.
\ee
Hence the corresponding area integral function is
\be
\| x \|_{G_k, \alpha} = \left ( \int_{\Sigma_\alpha} |z|^{2(k -1)} \Big \| \frac{\partial^k}{\partial z^k} (T_z x) \Big \|^2 d m (z) \right )^{\frac{1}{2}}, \quad \forall x \in \mathbb{H}
\ee
for any $0 < \alpha < \frac{\pi}{2}- \omega.$ We thus have that
\be
\| x \|_{G_k, \alpha} \thickapprox \| x \|_{G_m, \beta},\quad \forall x \in \mathbb{H}
\ee
for any $k, m \ge 1$ and any $0< \alpha, \beta < \frac{\pi}{2}- \omega.$
\end{ex}

\section{Proofs of main results}\label{pf}

This section is devoted to the proofs of Theorems \ref{th:AreaIntFunctEquiv} and \ref{th:AreIntFunctHinftyCalculus}. Our proofs require two technical variants of the square and area integral functions $\| x \|_F$ and $\| x \|_{F, \alpha}.$

Let $A$ be a sectorial operator of type $\omega \in (0, \pi)$ on $\mathbb{H}.$ Let $\theta \in (\omega, \pi)$ and $0 < \alpha < \frac{\theta - \omega}{2}.$ Given $\epsilon > 0$ and $\delta>0,$ we set for any $F \in H^{\8}_0 (\Sigma_{\theta}),$
\beq\label{eq:SuqareFunctVariant}
G_\epsilon (F)(x): = \left ( \int^{\8}_\epsilon \| F (t A ) x \|^2 \frac{d t}{ t} \right )^{\frac{1}{2}}, \quad \forall x \in \mathbb{H},
\eeq
and
\beq\label{eq:AreaIntFunctVariant}
S_{\alpha, \delta} ( F ) (x) : = \left ( \int_{\Sigma_{\alpha, \delta}} \| F (z A ) x \|^2 \frac{d m(z)}{|z|^2} \right )^{\frac{1}{2}}, \quad \forall x \in \mathbb{H},
\eeq
where $\Sigma_{\alpha, \delta} = \{z \in \mathbb{C}:\; |z| > \delta,\; | \mathrm{Arg} (z)| < \alpha \},$ respectively. Evidently,
\be
\| x \|_F = \lim_{\epsilon \to 0} G_\epsilon (F)(x) \quad \text{and} \quad \| x \|_{F, \alpha} = \lim_{\delta \to 0} S_{\alpha, \delta} ( F ) (x).
\ee

\begin{lem}\label{le:SquareAreaFunct} For any $\epsilon >0,$
\be
G_\epsilon (F)(x) \le \frac{2}{\sqrt{\pi \sin \alpha}} S_{\alpha, \epsilon (1- \sin \alpha)} ( F ) (x),\quad \forall x \in \mathbb{H}.
\ee
Consequently, for every $0 < \alpha < \frac{\theta - \omega}{2}$ we have
\beq\label{eq:Square<AreaFunct}
\| x \|_F \le  \frac{2}{\sqrt{\pi \sin \alpha}}\| x \|_{F, \alpha},\quad \forall x \in \mathbb{H}.
\eeq
\end{lem}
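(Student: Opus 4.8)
The plan is to exploit that the $\mathbb{H}$-valued map $z \mapsto F(zA)x$ is holomorphic on a sector, so that its squared norm $\phi(z) := \|F(zA)x\|^2$ is subharmonic; the sub-mean-value inequality then lets a two-dimensional average of $\phi$ (which assembles into $S_{\alpha,\delta}(F)(x)$) dominate the values of $\phi$ along the positive real axis (which assemble into $G_\epsilon(F)(x)$). To set this up, I would first note that, since $\alpha < \frac{\theta-\omega}{2} < \theta - \omega$, the map $z \mapsto F_z(A)x = F(zA)x$ extends holomorphically to a sector $\Sigma_{\alpha'}$ for some $\alpha' \in (\alpha, \theta - \omega)$: representing $F(zA)$ by the contour integral \eqref{eq:f(A)} along $\Gamma_\gamma$ with $\gamma \in (\omega, \theta - \alpha')$ keeps $z\lambda$ inside $\Sigma_\theta$ on the contour and allows differentiation in $z$ under the integral sign. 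Expanding in an orthonormal basis, $\phi = \sum_n |\langle F(zA)x, e_n\rangle|^2$ is a locally uniformly convergent sum of squared moduli of scalar holomorphic functions, hence subharmonic on $\Sigma_{\alpha'}$.

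Next, for each $t \ge \epsilon$ I would use the inscribed disk $D(t, t\sin\alpha)$. Because the distance from the point $t$ to the bounding ray $\{\mathrm{Arg} = \alpha\}$ is exactly $t\sin\alpha$, this is the largest disk centred at $t$ lying in $\Sigma_\alpha$, and every $z$ in it satisfies $|z| > t(1-\sin\alpha) \ge \epsilon(1-\sin\alpha) =: \delta$; thus the open disk lies in $\Sigma_{\alpha,\delta}$, while its closure lies in $\Sigma_{\alpha'}$ where $\phi$ is subharmonic. The sub-mean-value inequality then gives $\phi(t) \le \frac{1}{\pi(t\sin\alpha)^2}\int_{D(t,t\sin\alpha)}\phi\,dm$. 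Multiplying by $t^{-1}$, integrating over $t \ge \epsilon$, and interchanging the order of integration by Tonelli's theorem (all integrands being nonnegative) reduces matters to bounding, for each fixed $z$, the one-dimensional integral of $t^{-3}$ over $\{t \ge \epsilon : |z - t| < t\sin\alpha\}$.

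The genuine work — and the step I expect to be the main obstacle — is this last integral, which I would bound by extending it to the full interval of $t$ where $|z-t| < t\sin\alpha$. That condition is the quadratic inequality $t^2\cos^2\alpha - 2(\mathrm{Re}\,z)\,t + |z|^2 < 0$, with roots satisfying $t_+ t_- = |z|^2/\cos^2\alpha$ and $t_+ - t_- = 2\sqrt{(\mathrm{Re}\,z)^2 - \cos^2\alpha\,|z|^2}\,/\cos^2\alpha$, and a direct evaluation yields
\[
\int_{t_-}^{t_+}\frac{dt}{t^3} = \frac{2(\mathrm{Re}\,z)\sqrt{(\mathrm{Re}\,z)^2 - \cos^2\alpha\,|z|^2}}{|z|^4} \le \frac{2\sin\alpha}{|z|^2},
\]
where the last inequality uses $\mathrm{Re}\,z \le |z|$ together with $(\mathrm{Re}\,z)^2 - \cos^2\alpha\,|z|^2 \le |z|^2\sin^2\alpha$ valid on $\Sigma_\alpha$. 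Feeding this back and enlarging the domain of the $z$-integral to all of $\Sigma_{\alpha,\delta}$ (which only increases the right-hand side) gives $G_\epsilon(F)(x)^2 \le \frac{2}{\pi\sin\alpha}\,S_{\alpha,\delta}(F)(x)^2$; taking square roots produces the constant $\sqrt{2/(\pi\sin\alpha)} \le 2/\sqrt{\pi\sin\alpha}$, so the asserted bound holds with a little to spare. Finally, letting $\epsilon \to 0$, whence $\delta = \epsilon(1-\sin\alpha) \to 0$ and both sides increase monotonically to $\|x\|_F$ and $\|x\|_{F,\alpha}$ respectively, yields \eqref{eq:Square<AreaFunct}.
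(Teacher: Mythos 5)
Your proof is correct and takes essentially the same route as the paper: averaging $\|F(\cdot A)x\|^2$ over the inscribed disks $D(t,t\sin\alpha)$, interchanging the order of integration by Tonelli, and bounding the resulting $t$-integral by $2\sin\alpha/|z|^2$. The only differences are cosmetic --- you justify the pointwise bound via subharmonicity of $\|F(zA)x\|^2$ where the paper uses the vector-valued mean value property plus Cauchy--Schwarz, and you compute the exact interval $(t_-,t_+)$ where the paper simply encloses it in $\bigl[\tfrac{|z|}{1+\sin\alpha},\tfrac{|z|}{1-\sin\alpha}\bigr]$ --- and your constant $\sqrt{2/(\pi\sin\alpha)}$ is in fact slightly sharper than the stated $2/\sqrt{\pi\sin\alpha}$.
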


\begin{proof}
Given $t > \epsilon,$ let $D_t$ be the disc in $\mathbb{R}^2 \cong \mathbb{C}$ centered at $(t, 0)$ and tangent to the boundary of $\Gamma_{\alpha, \epsilon (1- \sin \alpha )}.$ Note that the mapping $z \mapsto F(z A) x$ is analytic in $\Sigma_\alpha,$ we have
\be
F (t A) x = \frac{2}{ (\pi  \sin^2 \alpha) t^2}\int_{D_t} F (z A) x d m (z).
\ee
Consequently,
\be
\| F (t A) x \|^2 \le \frac{C_\alpha}{ t^2} \int_{D_t} \| F (z A) x \|^2 d m (z)
\ee
with $C_\alpha = \frac{2}{\pi  \sin^2 \alpha}.$ Then
\be
[ G_\epsilon (F)(x)]^2 \le C_\alpha \int^{\8}_\epsilon \int_{D_t} \| F (z A) x \|^2 \frac{d m (z) d t}{t^3}.
\ee
However, since $\frac{|z|}{1+ \sin \alpha} \le t \le \frac{|z|}{1 - \sin \alpha}$ for any $z \in D_t,$ we have
\be\begin{split}
[ G_\epsilon (F)(x)]^2 & \le C_\alpha \int_{\Sigma_{\alpha, \epsilon (1- \sin \alpha)}} \| F( z A)\|^2 \int^{\frac{|z|}{1- \sin \alpha}}_{\frac{|z|}{1 + \sin \alpha}} \frac{d t}{t^3} d m(z)\\
& = 2 C_\alpha \sin \alpha \int_{\Sigma_{\alpha, \epsilon (1 - \sin \alpha )}} \| F( z A)\|^2 \frac{ d m (z)}{ |z|^2}\\
& = \frac{4}{\pi \sin \alpha} [ S_{\alpha, \epsilon (1- \sin \alpha)} (F)(x)]^2.
\end{split}\ee
This completes the proof.
\end{proof}

\

{\it Proof of Theorem \ref{th:AreaIntFunctEquiv}.}\; Note that the second assertion follows from the first one. Indeed, applying (i) with the constant function $f=1$ yields an estimate $\| x \|_{F, \alpha} \le K \| x \|_{G, \beta}.$ Then (ii) follows by switching the roles of $F$ and $G$ as well as $\alpha$ and $\beta.$

To prove (i), note that
\be
\int_{\Sigma_\alpha} \| f (A) F (z A ) x \|^2 \frac{d m(z)}{|z|^2} = \int^\alpha_{-\alpha} d s \int^{\8}_0 \| f (A) F (t e^{\mathrm{i} s} A ) x \|^2 \frac{d t}{t}.
\ee
By the proof of Theorem \ref{th:SquareFunctEquiv} (i) (see e.g. \cite{ADM1996, MY1989}), there exists a constant $K>0$ such that for any $f \in H^{\8} (\Sigma_\theta)$ and any $s \in (- \alpha, \alpha),$
\be
\left ( \int^{\8}_0 \| f (A) F (t e^{\mathrm{i} s}A ) x \|^2 \frac{d t}{ t} \right )^{\frac{1}{2}} \le K \| f \|_{\8, \theta} \| x \|_G, \quad \forall x \in \mathbb{H}.
\ee
Thus, we deduce that
\beq\label{eq:Area<SquareFunct}
\left ( \int_{\Sigma_\alpha} \| f (A) F (z A ) x \|^2 \frac{d m(z)}{|z|^2} \right )^{\frac{1}{2}} \le \sqrt{2 \alpha} K \| f \|_{\8, \theta} \| x \|_G, \quad \forall x \in \mathbb{H}.
\eeq
By Lemma \ref{le:SquareAreaFunct} we conclude (i).
\hfill $\Box$

\begin{rk}\label{rk:Sqare=AreaFunct}
Taking $f =1$ in \eqref{eq:Area<SquareFunct}, we obtain that
\be
\| x \|_{F, \alpha} \le \sqrt{2 \alpha} K \| x \|_G,\quad \forall x \in \mathbb{H}.
\ee
Combining this inequality with \eqref{eq:Square<AreaFunct} implies that
\beq\label{eq:Square=AreaFunct}
\| x \|_{F, \alpha} \thickapprox \| x \|_G,\quad \forall x \in \mathbb{H}.
\eeq
\end{rk}

\

{\it Proof of Theorem \ref{th:AreIntFunctHinftyCalculus}.}\; This is a straightforward consequence of Theorem \ref{th:SquareFunctHinftyCalculus} and the equivalence relationship \eqref{eq:Square=AreaFunct} between the square and area integral functions.
\hfill $\Box$


\begin{thebibliography}{99}

\bibitem{ADM1996} D. Albrecht, X. T. Duong, and A. McIntosh,
Operator theory and harmonic analysis,
{\it Proc. Centre Math. Analysis, Canberra} {\bf 14} (1996), 77-136.

\bibitem{CS} Z. Chen and M. Sun,
Area integral functions for sectorial operators on $L_p$ spaces,
in progress.


\bibitem{CDMY1996} M. Cowling, I. Doust, A. McIntosh, and A. Yagi,
Banach space operators with a bounded $H^{\infty}$ functional calculus,
{\it J. Austr. Math. Soc. Series A} {\bf 60} (1996), 51-89.

\bibitem{Gold1985} J. A. Goldstein,
{\it Semigroups of linear operators and applications,}
Oxford University Press, New York, 1985.

\bibitem{KW2001} N. Kalton, and L. Weis,
The $H^{\infty}$ calculus and sums of closed operators,
{\it Math. Annalen} {\bf 321} (2001), 319-345.



\bibitem{LeM2003} C. Le Merdy,
The Weiss conjecture for bounded analytic semigroups,
{\it J. London Math. Soc.} {\bf 67} (2003), 715-738.


\bibitem{LeM2007} C. Le Merdy,
Square functions, bounded analytic semigroups, and applications,
In: {\it Perspectives in Operator Theory,}
Banach Center Publ. {\bf 75}, Polish Acad. Sci., Warsaw, 2007, 191-220.

\bibitem{M1986} A. McIntosh,
Operators which have an $H^{\infty}$ functional calculus,
{\it Proc. Centre Math. Analysis, Canberra} {\bf 14} (1986), 210-231.

\bibitem{MY1989} A. McIntosh and A. Yagi,
Operators of type $\omega$ without a bounded $H^{\8}$ functional calculus,
{\it Proc. Centre Math. Analysis, Canberra} {\bf 24} (1989), 159-172.

\bibitem{Stein1970} E. M. Stein,
{\it Topics in Harmonic Analysis Related to the Littlewood-Paley Theory,}
Princeton University Press, Princeton, 1970.

\end{thebibliography}
\end{document}